\newtheorem{theorem}{Theorem}[section]
\newtheorem{lemma}[theorem]{Lemma}
\newtheorem{corollary}[theorem]{Corollary}
\newtheorem{defn}[theorem]{Definition}
\newtheorem{lthm}{Theorem} 
\theoremstyle{remark}
\newtheorem{remark}[theorem]{Remark}
\newcommand{\Gal}{\mathrm{Gal}}
\newcommand{\cG}{\mathcal{G}}
\newcommand{\SL}{\mathrm{SL}}
\newcommand{\ovp}{\overline{\varphi}}
\newcommand{\vp}{\varphi}
\newcommand{\GL}{\mathrm{GL}}
\newcommand{\Tr}{\mathrm{Tr}}
\newcommand{\Frob}{\mathrm{Frob}}
\newcommand{\cor}{\mathrm{cor}}
\newcommand{\ord}{\mathrm{ord}}
\newcommand{\ZZ}{\mathbb{Z}}
\newcommand{\CC}{\mathbb{C}}
\newcommand{\NN}{\mathbb{N}}
\newcommand{\QQ}{\mathbb{Q}}
\newcommand{\Fp}{\mathbb{F}_p}
\newcommand{\Zp}{\ZZ_p}
\title[Ramanujan's tau and Mazur--Tate elements]{Congruences between Ramanujan's tau function and elliptic curves, and Mazur--Tate elements at additive primes}
\author[A. Doyon]{Anthony Doyon}
\address[Doyon]{D\'epartement de Math\'ematiques et de Statistique\\
Universit\'e Laval, Pavillion Alexandre-Vachon\\
1045 Avenue de la M\'edecine\\
Qu\'ebec, QC\\
Canada G1V 0A6}
\email{andoy32@ulaval.ca}
\author[A. Lei]{Antonio Lei}
\address[Lei]{D\'epartement de Math\'ematiques et de Statistique\\
Universit\'e Laval, Pavillion Alexandre-Vachon\\
1045 Avenue de la M\'edecine\\
Qu\'ebec, QC\\
Canada G1V 0A6}
\email{antonio.lei@mat.ulaval.ca}
\keywords{Ramanujan's tau function, Mazur--Tate elements, congruences of modular forms, additive primes}
\subjclass[2010]{Primary: 11R23; Secondary: 11S40, 11G05}
\begin{document}
\begin{abstract}
 We show that if $E/\mathbb{Q}$ is an elliptic curve with a rational $p$-torsion for $p=2$ or $3$, then there is a congruence relation between Ramanujan's tau function and  $E$ modulo $p$. We make use of such congruences to compute the Iwasawa invariants of $2$-adic and $3$-adic Mazur--Tate elements attached to Ramanujan's tau function. We also investigate numerically the Iwasawa invariants of the Mazur--Tate elements attached to an elliptic curve with additive reduction at a fixed prime number.
\end{abstract}

\maketitle

\section{Introduction}

Let  $\Delta\in S_{12}(\SL_2(\ZZ))$ be the unique normalized cuspidal modular form of weight twelve and level one defined by Ramanujan's tau function $\tau:\NN\rightarrow \ZZ$, given explicitly by the $q$-expansion
\[
\Delta(z)=\sum_{n\ge1}\tau(n)q^n=q\prod_{n\ge1}\left(1-q^n\right)^{24},
\]
where $q=e^{2\pi i z}$ with $z$ being a variable in the upper half plane $\{z\in\mathbb{C}:\mathrm{Im}(z)>0\}$. This modular form encodes very rich arithmetic information and plays an important role in modern day Number Theory. Ramanujan's tau function satisfies interesting congruence relations, many of which can be explained by the theory of modular forms (see \cite{S-D1,S-D2,S-Dtau,serre-tau,serre-cong} for detailed discussions). One important feature in the theory of $p$-adic families of modular forms is congruence relations between Fourier coefficients of modular forms (see \cite{hida,serre-zeta,emerton}). It is therefore  natural  to study congruences between $\Delta$ and other modular forms. In \cite[\S6]{sujatha}, Sujatha discussed a congruence relation between $\Delta$ and the elliptic curve $X_0(11)$ modulo 11. We note in particular that $11\nmid \tau(11)= 	534612$, meaning that $11$ is an ordinary prime for $\Delta$. The congruence modulo $11$ above originates from the fact that both $\Delta$ and the weight-two modular form attached to $X_0(11)$ lie inside a Hida family. 

The starting point of the present article is to study congruence relations between $\Delta$ and other elliptic curves. Let $f_1$ and $f_2$ be two modular forms of weights $k_1$ and $k_2$ respectively. A necessary condition for the two modular forms to satisfy a congruence relation modulo a prime number $p$ is that $k_1\equiv k_2\mod p-1$. The fact that $\Delta$ is of weight $12$ means that if it satisfies a congruence relation with a weight two modular form, $p-1$ has to divide $10$. The only possible values $p$ can take are $2,3$ and $11$. We have $2\mid\tau(2)=-24$ and $3\mid\tau(3)=252$. In particular, $2$ and $3$ are both non-ordinary primes for $\Delta$. This means that any congruences between $\Delta$ and an elliptic curve modulo 2 or 3 cannot come from Hida Theory. Nonetheless, our first main result tells us that such congruences exist.

\begin{lthm}[Theorem~\ref{thm:cong}]\label{thm:A}
Let $p\in\{2,3\}$. Let $E/\QQ$ be an elliptic curve and denote its conductor by $N_E$. Suppose that $E$ admits a $p$-torsion point defined over $\QQ$. Then  $$a_\ell(E) \equiv \tau (\ell)\mod p$$  for all primes $\ell \nmid pN_E$.
\end{lthm}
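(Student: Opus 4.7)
The plan is to compare the mod $p$ Galois representations attached to $\Delta$ and to $E$. Write
\[
\overline{\rho}_{\Delta,p},\;\overline{\rho}_{E,p}\colon G_\QQ\longrightarrow \GL_2(\Fp)
\]
for the mod $p$ representations afforded by Deligne's construction and by the $p$-torsion of $E$ respectively, and let $\overline{\chi}\colon G_\QQ\to\Fp^{\times}$ denote the mod $p$ cyclotomic character. Since $\Delta$ has level one, $\overline{\rho}_{\Delta,p}$ is unramified at every $\ell\ne p$, with $\Tr\overline{\rho}_{\Delta,p}(\Frob_\ell)\equiv\tau(\ell)\pmod{p}$ and $\det\overline{\rho}_{\Delta,p}=\overline{\chi}^{11}$; similarly $\overline{\rho}_{E,p}$ is unramified away from $pN_E$, with traces $a_\ell(E)\bmod p$ and determinant $\overline{\chi}$. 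By the Chebotarev density theorem, the congruence to be proved will follow once the semisimplifications $\overline{\rho}_{\Delta,p}^{\mathrm{ss}}$ and $\overline{\rho}_{E,p}^{\mathrm{ss}}$ are shown to be isomorphic.

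For the elliptic curve side, the rational $p$-torsion point furnishes a $G_\QQ$-stable line on which $\overline{\rho}_{E,p}$ acts trivially. Combined with $\det\overline{\rho}_{E,p}=\overline{\chi}$, this forces $\overline{\rho}_{E,p}^{\mathrm{ss}}\cong\mathbf{1}\oplus\overline{\chi}$, so that
\[
a_\ell(E)\equiv 1+\ell\pmod{p}\qquad\text{for every prime }\ell\nmid pN_E.
\]

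For $\Delta$, the analogous claim is that $\overline{\rho}_{\Delta,p}^{\mathrm{ss}}\cong\mathbf{1}\oplus\overline{\chi}^{11}$, and one observes that $\overline{\chi}^{11}=\overline{\chi}$ in both cases of interest: trivially for $p=2$, where $\overline{\chi}$ is itself the trivial character, and because $\overline{\chi}$ has order dividing two for $p=3$. It therefore suffices to establish the congruence
\[
\tau(\ell)\equiv 1+\ell\pmod{p}\quad\text{for all primes }\ell\ne p.
\]
This is a classical property of Ramanujan's tau function going back to Wilton, Swinnerton-Dyer and Serre, and is a special case of the well-known identities $\tau(n)\equiv\sigma_{11}(n)\pmod{p}$ for $(n,p)=1$ and $p\in\{2,3\}$; these are recorded in the references \cite{S-D1,S-D2,S-Dtau,serre-tau,serre-cong} already cited in the introduction. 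Note that the displayed congruence is equivalent to the assertion that $\overline{\rho}_{\Delta,p}$ has reducible semisimplification with constituents $\mathbf{1}$ and $\overline{\chi}$, which is the content of the corresponding pieces of Swinnerton-Dyer's analysis.

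Combining the two computations yields $a_\ell(E)\equiv 1+\ell\equiv\tau(\ell)\pmod{p}$ for every $\ell\nmid pN_E$, as desired. The only point of substance is locating and citing the mod $2$ and mod $3$ congruences for $\tau$; everything else is a formal consequence of Chebotarev together with the explicit description of $\overline{\rho}_{E,p}$ forced by the rational $p$-torsion point, so I do not anticipate any serious obstacle beyond bookkeeping the references.
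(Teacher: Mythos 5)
Your proposal is correct, and for the tau-function half it coincides with the paper: both arguments reduce to the classical congruence $\tau(\ell)\equiv 1+\ell^{11}\equiv 1+\ell\pmod p$ for $\ell\ne p$ (Lehmer, Swinnerton-Dyer, Serre), which the paper isolates as Lemma~\ref{lem:tau23} and derives from the sharper congruences modulo $2^5$ and $3^3$ together with Fermat's little theorem. For the elliptic-curve half you argue representation-theoretically: the rational $p$-torsion point gives a $G_\QQ$-stable line with trivial action, the determinant is the mod $p$ cyclotomic character, hence $\overline{\rho}_{E,p}$ has semisimplification $\mathbf{1}\oplus\overline{\chi}$ and $a_\ell(E)\equiv 1+\ell\pmod p$ at all $\ell\nmid pN_E$. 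This is precisely the alternative argument the paper records in the remark immediately following Theorem~\ref{thm:cong}; the proof the paper actually gives is more elementary, writing $a_\ell(E)=1+\ell-|E(\mathbb{F}_\ell)|$ and showing $p\mid |E(\mathbb{F}_\ell)|$ because the prime-to-$\ell$ torsion injects under reduction modulo $\ell$ (Silverman, Prop.~VII.3.1) and Lagrange's theorem applies. The two routes are equivalent in content; yours generalizes more readily to other reducibility situations, while the paper's avoids Galois representations altogether. One minor point: your opening reduction to an isomorphism of semisimplifications via Chebotarev is unnecessary scaffolding (and for $p=2$ the Brauer--Nesbitt comparison would require characteristic polynomials rather than traces); your argument never actually uses it, since you compute $\tau(\ell)$ and $a_\ell(E)$ modulo $p$ separately and compare them directly, which is exactly what is needed.
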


Curiously, even though $\Delta$ is non-ordinary at $p\in\{2,3\}$, an elliptic curve $E$ admitting a rational $p$-torsion, as imposed by Theorem~\ref{thm:A} can be ordinary at $p$. Indeed, if $E$ has good reduction at $p$ and $p\big| |E(\mathbb{F}_p)|$, then  $a_p(E)\equiv 1\mod p$, meaning that $p$ is an anomalous ordinary prime for $E$.

For a fixed prime $p$ and a modular form $f$ with good reduction at $p$, we write $\theta_{n,f}$ for the Mazur--Tate element attached to $f$ over the sub-extension of the cyclotomic $\Zp$-extension of $\QQ$ of degree $p^n$. We shall write $\lambda(\theta_{n,f})$ for the $\lambda$-invariant of $\theta_{n,f}$  (see \S\ref{sec:MT} for a summary of the definitions of these objects). When $p$ is an ordinary prime for $f$, $\lambda(\theta_{n,f})$ is relatively well understood. We are interested in studying $\lambda(\theta_{n,\Delta})$ when $p$ is a non-ordinary prime for $\Delta$.  Using algorithms of Pollack, we have been able to calculate these $\lambda$-invariants explicitly for small $n$. Our numerical data suggest that they are given by $2^{n-2}-2$, $3^n-2$, $5^n-1$ and $7^n-1$ respectively; see Table~\ref{tab:table4} in \S\ref{sec:data} of the main body of the article.

 Pollack and Weston \cite{PW} have proved several formulae for $\lambda(\theta_{n,f})$ when $f$ is non-ordinary at $p$ with Serre weight 2 under various hypotheses on the residual representation, the weight, the prime number $p$ and the $p$-adic valuation of the $p$-th Fourier coefficient. One of the key ingredients in the work of Pollack--Weston is to compare $f$ to a weight 2 modular form $g$ via congruences modulo $p$. We may in fact describe the Iwasawa invariants of $\theta_{n,f}$ in terms of those of $\theta_{n,g}$, which  can be described explicitly. 

While the results of \cite{PW} do not apply to $\Delta$ at the {primes $p=2$ and $3$, the congruences modulo $p$ exhibited by Theorem~\ref{thm:A} suggest that some of the techniques in loc. cit. may allow us to study the Iwasawa invariants of  $\theta_{n,\Delta}$, shedding light on the regular patterns exhibited by the numerical data given in Table~\ref{tab:table4}. The following theorem where we compare the $\lambda$-invariants of $\theta_{n,\Delta}$ and $\theta_{n,E}:=\theta_{n,f_E}$, where $f_E$ is a weight-two modular form corresponding to an elliptic curve $E$ defined over $\QQ$ with conductor $32$ and $27$ (via the modularity theorem), is obtained along the lines of argument presented in \cite{PW}. We note that there exists a single isogeny class of such curves with four isomorphism classes. Three of the isomorphism classes of curves of conductor $27$ admit  a rational $3$-torsion and all four isogeny classes of curves of conductor $32$ admit a rational $2$-torsion, so Theorem~\ref{thm:A} applies for these curves. In fact, we obtain a full congruence between $E$ and $\Delta$ in the sense that 
\[
a_\ell(E)\equiv \tau(\ell)\mod p
\]
for all primes $\ell$ (including $\ell=p$).} This allows us to establish:

\begin{lthm}[Theorem~\ref{thm:Delta3}]\label{thm:B}
{Let $(p,N)=(2,32)$ or $(3,27)$, $n\ge1$ and $E$ an elliptic curve defined over $\QQ$ of conductor $N$.} If $\theta_{n,\Delta}\notin p\Zp[\cG_n]$, then
\[
\lambda(\theta_{n,\Delta})=\lambda(\theta_{n,E}).
\]
\end{lthm}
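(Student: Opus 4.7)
The plan is to follow the strategy of Pollack--Weston \cite{PW}: translate the congruence of Fourier coefficients furnished by Theorem~\ref{thm:A} into a congruence of Mazur--Tate elements modulo $p$, and then read off the equality of $\lambda$-invariants from the assumed vanishing of the $\mu$-invariant.

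The first step is to establish the full congruence $a_\ell(E)\equiv\tau(\ell)\pmod{p}$ for \emph{every} prime $\ell$. Theorem~\ref{thm:A} handles $\ell\nmid pN_E$; for $\ell=p$, one checks $a_p(E)\equiv\tau(p)\pmod{p}$ directly. Since $E$ has additive reduction at $p$ (as $v_p(N)\ge 2$ for $N\in\{27,32\}$) and $p\in\{2,3\}$ is non-ordinary for $\Delta$, both $a_p(E)$ and $\tau(p)$ are divisible by $p$ in the cases at hand, so the congruence is automatic. In particular, the semisimple residual Galois representations $\overline{\rho}_{\Delta,p}$ and $\overline{\rho}_{E,p}$ are isomorphic.

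The main technical step is to upgrade this to a congruence of Mazur--Tate elements in $\Zp[\cG_n]$. Following Pollack--Weston, I would normalize the modular symbols of $\Delta$ and $f_E$ by canonical periods so that they take values in $\Zp$ and are primitive modulo $p$. The isomorphism of residual representations together with a multiplicity-one statement for the $\overline{\rho}$-isotypic component of the relevant Betti cohomology groups yields a congruence between these two modular symbols modulo $p$. Since $\theta_{n,f}$ is constructed as a twisted sum of modular symbol values indexed by characters of $\cG_n$, this congruence propagates to
\[
\theta_{n,\Delta}\equiv u\cdot\theta_{n,E}\pmod{p}
\]
in $\Zp[\cG_n]$ for some $p$-adic unit $u$. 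The delicate point, and the main obstacle, is the weight discrepancy between $\Delta$ (weight twelve) and $f_E$ (weight two); reconciling this requires an explicit comparison between the integral weight-$12$ and weight-$2$ modular symbol spaces, in the spirit of the weight-comparison arguments of \cite{PW}.

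Once this congruence is in hand, the conclusion is formal. The hypothesis $\theta_{n,\Delta}\notin p\Zp[\cG_n]$ says that the $\mu$-invariant of $\theta_{n,\Delta}$ vanishes, and the same then holds for $\theta_{n,E}$. When $\mu=0$, the $\lambda$-invariant depends only on the image in $\Fp[\cG_n]$, and multiplication by the unit $u$ preserves it; therefore $\lambda(\theta_{n,\Delta})=\lambda(\theta_{n,E})$.
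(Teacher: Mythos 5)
Your proposal follows essentially the same route as the paper: the full congruence of Fourier coefficients (including $\ell=p$, where additive reduction gives $a_p(E)=0$ and $p\mid\tau(p)$), followed by the weight-lowering comparison of integral modular symbols in the style of \cite[\S 7]{PW} and the passage from a mod-$p$ congruence of symbols to a mod-$p$ congruence of Mazur--Tate elements via \cite[Lemma~4.6]{PW}, whence equal $\lambda$-invariants once $\mu=0$. The only (minor) structural difference is that the paper invokes the hypothesis $\theta_{n,\Delta}\notin p\Zp[\cG_n]$ \emph{before} asserting the congruence of symbols --- it is needed to guarantee that the weight-lowering map does not annihilate $\ovp_\Delta$, so that multiplicity one identifies its image with $\ovp_{f_N}$ up to a unit --- whereas you state the unit congruence $\theta_{n,\Delta}\equiv u\,\theta_{n,E}\bmod p$ first and only use the hypothesis at the end; you should move that appeal earlier, since primitivity of $\ovp_\Delta$ in weight $12$ alone does not rule out its image in weight $2$ vanishing.
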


Note that the elliptic curves over $\QQ$ of conductor $32$ and $27$ have additive reduction at $2$ and $3$ respectively. It led us to study the following question.

\noindent\textbf{Question.} Is there a general formula for $\lambda(\theta_{n,E})$ if $E/\QQ$ is an elliptic curve with additive reduction at $p$?

We carry out numerical calculations of $\lambda(\theta_{n,E})$ at a prime $p\in\{2,3,5,7\}$, where $E$ has additive reduction, using Pollack's algorithm.  The codes we used are available on \url{https://github.com/anthonydoyon/Ramanujan-s-tau-and-MT-elts} and our results are presented in Tables~\ref{tab:table1}-\ref{tab:table3} in \S\ref{sec:data}. To our surprise, for a given $E$ and a given $p$, there seems to always exist a very regular formula for $\lambda(\theta_{n,E})$ in terms of $n$ when $n$ is sufficiently large. While we are not able to fully explain the origins of these formulae, we are able to explain why $\lambda(\theta_{n,E})$ is always at least $p^{n-1}$ (see Corollary~\ref{cor:lambda-add}). When $E$ has potentially good ordinary or potentially multiplicative reduction, Delbourgo \cite{del-compositio,del-JNT} has defined a $p$-adic $L$-function and  formulated an Iwasawa main conjecture for $E$ under the hypothesis that $E$ is the twist of a modular form with good reduction at $p$ by a Dirichlet character. We believe that our formulae on $\lambda(\theta_{n,E})$ should be related to Delbourgo's $p$-adic $L$-function. We intend to investigate this further in a future project.  For certain potentially supersingular elliptic curves, the formulae we find depends on the parity of $n$, which has a great resemblance of the formulae for elliptic curves that have good supersingular reduction at $p$. We plan to develop the supersingular analogue of Delbourgo's theory in order to better understand these formulae from a theoretical view-point.

\subsection*{Acknowledgement}
We thank Ashay Burungale, Henri Darmon, Daniel Delbourgo, Cédric Dion, Jeffrey Hatley, Chan-Ho Kim, Robert Pollack, Katharina M\"uller and Sujatha Ramdorai for interesting discussions during the preparation of this article. {We would also like to thank the anonymous referee for very helpful comments  on an earlier version of the article, which led to many improvements. In particular, we thank the referee for encouraging us to study the case $p=2$ in \S\ref{S:3MT}.} The authors' research is supported by the  NSERC Discovery Grants Program RGPIN-2020-04259 and RGPAS-2020-00096. Parts of this work were carried during a summer research project carried out by the first named author at Université Laval in 2020, which was supported by an NSERC Undergraduate Student Research Award.

\section{Congruences between Ramanujan's tau function  and elliptic curves with rational 2 or 3 torsions}
We study a congruence relation between $\Delta$ and elliptic curves defined over $\QQ$ which admit a rational $2$-torsion or a $3$-torsion. We begin with the following lemma on the values of $\tau$ modulo $2$ and modulo $3$.
\begin{lemma}\label{lem:tau23}
Let $p\in\{2,3\}$. For all primes $\ell\ne p$, we have
\[
  \tau(\ell) \equiv 1 +\ell\mod p
\]
\end{lemma}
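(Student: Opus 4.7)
The plan is to deduce the lemma from the classical Swinnerton–Dyer/Serre congruence
\[
\tau(n)\equiv \sigma_{11}(n)\pmod{p}
\]
valid for $p\in\{2,3\}$ (and also $p\in\{5,7,691\}$), which expresses the fact that the mod-$p$ Galois representation attached to $\Delta$ is reducible and agrees on Hecke eigenvalues with those of the weight-$12$ Eisenstein series $E_{12}$. I would cite this directly from one of \cite{S-D1,serre-tau,serre-cong}; no reproof is needed.

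Specialising to $n=\ell$ a prime different from $p$, this yields
\[
\tau(\ell)\equiv 1+\ell^{11}\pmod{p}.
\]
The remaining task is to replace $\ell^{11}$ by $\ell$ modulo $p$. This is a one-line application of Fermat's little theorem. For $p=2$, the prime $\ell$ is odd, so $\ell^{11}\equiv 1\equiv \ell\pmod 2$. For $p=3$, since $\ell\ne 3$, Fermat gives $\ell^{2}\equiv 1\pmod 3$, hence $\ell^{11}=\ell\cdot(\ell^{2})^{5}\equiv \ell\pmod 3$. In either case $1+\ell^{11}\equiv 1+\ell\pmod p$, which gives the claimed congruence.

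There is essentially no obstacle here: the content of the lemma lies entirely in the input congruence $\tau(n)\equiv \sigma_{11}(n)\pmod p$, which is classical. The only point worth pausing on is that the excerpt works with both $p=2$ and $p=3$ uniformly, so I would state the proof uniformly, introducing the exponent $11$ first and then doing the two residue computations as a single display with the case distinction $p\in\{2,3\}$, so that the Fermat reduction is transparent.
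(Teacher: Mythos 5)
Your proof is correct and follows essentially the same route as the paper: the paper quotes Lehmer's congruences $\tau(\ell)\equiv 1+\ell^{11}\pmod{2^5}$ and $\tau(\ell)\equiv \ell^{2}+\ell^{9}\pmod{3^3}$ from Serre's survey and then reduces via Fermat's little theorem, exactly as you do starting from $\tau(\ell)\equiv\sigma_{11}(\ell)=1+\ell^{11}\pmod p$. (A cosmetic caveat only: your parenthetical assertion that $\tau(n)\equiv\sigma_{11}(n)\pmod p$ also holds for $p=5,7$ is not the correct form of those congruences --- e.g.\ $\tau(2)\not\equiv\sigma_{11}(2)\pmod 5$ --- but this plays no role in your argument for $p\in\{2,3\}$.)
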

\begin{proof}
Lehmer has proven that $\tau(\ell) \equiv 1 +\ell^{11} \mod 2^5$ if $\ell \neq 2$ and $\tau(\ell) \equiv \ell^{2} +\ell^{9} \mod 3^3$ if $\ell \neq 3$ (see for example \cite[\S2.1]{serre-tau}). But $\ell^{p-1}\equiv 1\mod p$ for all $\ell\ne p$ by Fermat's little theorem. Hence the result follows.
\end{proof}
We can now relate $\Delta$ to elliptic curves admitting a rational $2$-torsion or a rational $3$-torsion.

\begin{theorem}\label{thm:cong}
Let $p\in\{2,3\}$. Let $E/\QQ$ be an elliptic curve and denote its conductor by $N_E$. Suppose that $E$ admits a $p$-torsion point defined over $\QQ$. Then  $$a_\ell(E) \equiv \tau (\ell)\mod p$$  for all primes $\ell \nmid pN_E$.
\end{theorem}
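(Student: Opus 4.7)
The plan is to combine Lemma~\ref{lem:tau23} with a standard analysis of the mod $p$ Galois representation attached to $E$. After the lemma, it suffices to prove that, under the hypothesis of a rational $p$-torsion point, one has
\[
a_\ell(E)\equiv 1+\ell\mod p
\]
for every prime $\ell\nmid pN_E$. Then the two congruences chain together to give the theorem.

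To establish this reduced statement, I would look at the mod $p$ representation $\bar\rho_{E,p}\colon G_\QQ\to\GL_2(\Fp)$ arising from the action of $G_\QQ$ on $E[p]$. The existence of a nonzero rational $p$-torsion point means that $\bar\rho_{E,p}$ has a one-dimensional $G_\QQ$-stable subspace on which the Galois action is trivial. Hence, in a suitable basis,
\[
\bar\rho_{E,p}\sim \begin{pmatrix}1 & * \\ 0 & \psi\end{pmatrix}
\]
for some character $\psi\colon G_\QQ\to\Fp^\times$. The Weil pairing identifies $\det\bar\rho_{E,p}$ with the mod $p$ cyclotomic character $\chi_p$, so $\psi=\chi_p$.

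For any prime $\ell\nmid pN_E$, the representation $\bar\rho_{E,p}$ is unramified at $\ell$, and the trace of $\bar\rho_{E,p}(\Frob_\ell)$ equals $a_\ell(E)\bmod p$ while the determinant equals $\ell\bmod p$. Reading the trace off the upper-triangular form gives
\[
a_\ell(E)\equiv 1+\chi_p(\Frob_\ell)\equiv 1+\ell\mod p,
\]
and combining this with Lemma~\ref{lem:tau23} yields the desired congruence $a_\ell(E)\equiv\tau(\ell)\bmod p$.

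The whole argument is short and essentially formal once one has the mod $p$ representation framework, so I do not anticipate a serious obstacle. The only point that deserves a careful line is ensuring the rational $p$-torsion really gives a \emph{Galois-stable} line with trivial action (immediate from the $G_\QQ$-fixed vector) and that we may apply $\det\bar\rho_{E,p}=\chi_p$; these are standard but should be stated explicitly so that the reader sees where the assumption $p\in\{2,3\}$ is not used in this step—it enters only via Lemma~\ref{lem:tau23}, which is what forces the weight-$12$ form $\Delta$ to behave mod $p$ like a weight-$2$ form with a rational $p$-torsion.
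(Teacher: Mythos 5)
Your argument is correct, but it is not the route the paper takes for its official proof: the paper instead uses the elementary counting argument. Namely, it invokes Lemma~\ref{lem:tau23} exactly as you do, and then shows $a_\ell(E)\equiv 1+\ell \bmod p$ by writing $a_\ell(E)=1+\ell-|E(\mathbb{F}_\ell)|$ and proving $p\mid |E(\mathbb{F}_\ell)|$: the reduction map $E(\QQ)\to E(\mathbb{F}_\ell)$ is injective on torsion prime to $\ell$ (citing \cite[Proposition~VII.3.1]{Si}), so the rational subgroup $\langle\alpha\rangle$ of order $p$ injects into $E(\mathbb{F}_\ell)$ and Lagrange finishes the job. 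Your Galois-representation argument --- trivial stable line from the fixed vector, determinant equal to $\chi_p$ via the Weil pairing, trace of $\Frob_\ell$ equal to $1+\chi_p(\Frob_\ell)$ --- is precisely what the authors record as an ``alternative approach'' in the remark immediately following the theorem, so it is fully endorsed by the paper. The counting proof buys elementarity (no need to know the Eichler--Shimura relation identifying $\Tr\bar\rho_{E,p}(\Frob_\ell)$ with $a_\ell(E)\bmod p$), while your version makes the structural reason for the congruence transparent and generalizes more readily (e.g.\ to situations where one only has a reducible residual representation rather than an actual rational torsion point). Your closing observation that $p\in\{2,3\}$ enters only through Lemma~\ref{lem:tau23} is accurate and worth keeping.
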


\begin{proof}
Lemma~\ref{lem:tau23}  tells us that $\tau(\ell) \equiv 1 +\ell\mod p$ for all  $\ell \neq p$ . Since 
\[
a_\ell(E)=1+\ell-|E(\mathbb{F}_\ell)|,
\]
it suffices to show that $|E(\mathbb{F}_\ell)| \equiv 0\mod p$. Let $\alpha \in E(\QQ)[p]$ be a non-trivial $p$-torsion of $E$. Then,  $\langle\alpha\rangle$ is a subgroup of $E(\QQ)[p]$ of order $ p$. 

Consider the natural group homomorphism $\pi_\ell : E(\QQ) \to E(\mathbb{F}_\ell)$ given by reduction modulo $\ell$. By \cite[Proposition~VII.3.1]{Si}, $\pi_\ell$ induces an injective group homomorphism  $$E(\QQ)[p]\hookrightarrow E(\mathbb{F}_\ell).$$ In particular, $\pi_\ell(\langle\alpha\rangle)$ is a subgroup of order $p$ inside $E(\mathbb{F}_\ell)$. Thus, Lagrange's theorem tells us that $p\big | |E(\mathbb{F}_p)|$ as required.
\end{proof}

\begin{remark}
An alternative approach to prove Theorem~\ref{thm:cong} is to consider the Galois representation $\rho_{E,p}: G_\QQ\rightarrow \GL(E[p])=\GL_2(\ZZ/p\ZZ)$. Since $E$ admits a rational $p$-torsion, $\rho_{E,p}$ admits a one-dimensional trivial $\mathbb{F}_p$-linear sub-representation. Since the determinant of $\rho_{E,p}$ is given by  the mod $p$ cyclotomic character $\chi_p:G_\QQ\rightarrow (\ZZ/p\ZZ)^\times$, we have
\[
\rho_{E,p}\cong \begin{pmatrix}
1&*\\
0&\chi_p
\end{pmatrix}.
\]
Therefore, for all $\ell\nmid pN_E$, we have
\[
 a_\ell(E)\equiv \Tr(\rho_{E,p}(\Frob_\ell))=1+\chi_p(\Frob_\ell)=1+\ell \mod p,
\]
where $\Frob_\ell$ is the Frobenius at $\ell$.
\end{remark}

In the same vein as the results presented in \cite{PW}, Theorem~\ref{thm:cong} suggests that there might be a link between Iwasawa-theoretic objects of $\Delta$ and elliptic curves admitting a rational $2$-torsion or a $3$-torsion. In the next section, we shall review the objects we are interested in, namely Mazur--Tate elements attached to modular forms. We will then relate the 3-adic Mazur--Tate elements attached to $\Delta$ to certain elliptic curves admitting a rational $3$-torsion in \S\ref{S:3MT}.

\section{Review of   Mazur--Tate elements and Iwasawa invariants}\label{sec:MT}
\subsection{Definition and basic properties of Mazur--Tate elements}

In this section, we review the definition and some basic properties of the Mazur--Tate elements defined in \cite{MT}. We follow closely the exposition of Pollack--Weston in \cite[\S2.1 and \S2.2]{PW}. Throughout this section,  $p$ is a fixed prime number and $f \in S_{k}(\Gamma_0(N))$ is a fixed normalized cuspidal eigenform. For simplicity, we assume throughout that the Fourier coefficients of $f$ lie in $\ZZ$ (which is indeed the case when $f=\Delta$ or when $f$ corresponds to an elliptic curves defined over $\QQ$, which are the cases of interest in the present article). 

Suppose that $p$ is odd. Let $G_n =\Gal(\QQ (\mu_{p^n}) / \QQ)$. We identify an element $a \in (\ZZ / p^n \ZZ)^{\times}$ with the unique element $\sigma_a \in G_n$ satisfying $\sigma_a(\zeta) = \zeta^a$ for all $\zeta \in \mu_{p^n}$. Let $K_n$ be the unique sub-extension of $\QQ(\mu_{p^{n+1}})$ such that $[K_n:\QQ]=p^n$. We write $\cG_n=\Gal\left(K_n/\QQ\right)$, which we may identify with a quotient of $G_{n+1}$ and we are equipped with a natural projection map $\pi_n:G_{n+1}\twoheadrightarrow\cG_n$.

When $p=2$, we define $G_n$ to be $\Gal(\QQ(\mu_{p^{n+1}})/\QQ)$ and $\sigma_a\in G_n$ for $a\in(\ZZ/p^{n+2}\ZZ)^\times$  as before. Let $K_n$ be the fixed field of $\sigma_{-1}$ inside $\QQ(\mu_{p^{n+2}})$. Then, $K_n$ is a Galois extension of $\QQ$ of degree $p^n$ with $\Gal(K_n/\QQ)\cong \ZZ/p^n\ZZ$. We define $\cG_n$ and $\pi_n:G_{n+1}\twoheadrightarrow\cG_n$ as before.  

\begin{defn}\label{def:modsymb}
Let $\mathcal{R}$ be a commutative ring. We denote by $V_k (\mathcal{R})$ the space of homogenous polynomials of degree $k$ in two variables. Let $\Gamma \subset \SL_2(\ZZ)$ be a congruence subgroup. As in \cite[\S2.2]{PW}, we define a modular symbol $$\varphi_f\in H^1_c(\Gamma_0(N),V_{k-2}(\CC))\cong \mathrm{Hom}_{\Gamma_0(N)}\left(\mathrm{Div}^0(\mathbb{P}^1(\QQ),V_{k-2}(\CC))\right)$$ attached to $f$  satisfying $$\varphi_f(\{r\} - \{s\}) = 2 \pi i \int_s^r f(z)(zX+Y)^{k-2} dz$$
for $r,s\in\mathbb{P}^1(\QQ)$, where $\{r\}$ and $\{s\}$ are divisors associated to $r $ and $s$ respectively.
\end{defn}

\begin{defn}\label{def:MT}
Fix $n \in \NN$. When $p$ is odd, we define  $$\Theta_{n,f} = \sum_{a \in (\ZZ / p^{n+1} \ZZ)^\times}  \varphi_f (\{\infty\} - \{a/p^{n+1}\}) \big |_{(X,Y) = (0,1)} \cdot \sigma_a \in \CC[G_{n+1}]$$
and denote the image of $\Theta_{n,f}$ in $\CC[\cG_n]$ under the natural norm map induced by $\pi_n$ by $\tilde\Theta_{n,f}$. When $p=2$, $\Theta_{n,f}$ is defined similarly with $(\ZZ/p^{n+1}\ZZ)^\times$ and $a/p^{n+1}$ replaced by $(\ZZ/p^{n+2}\ZZ)^\times$ and $a/p^{n+2}$ respectively.

The $p$-adic \textbf{Mazur--Tate element} of level $n$ attached to $f$ is defined to be
\[
\theta_{n,f}=\frac{\tilde\Theta_{n,f}}{\Omega_f^+},
\]
where $\Omega_f^+$ is the cohomological period for $f$ given in \cite[Definition~2.1]{PW}.
\end{defn}

\begin{remark}
We are only looking at the $+1$-eigenspace of the involution induced by $\begin{pmatrix}-1&0\\0&1\end{pmatrix}$ on the space of modular {symbols} since this is where our numerical calculations will be carried out. This is why we only make use of the period $\Omega_f^+$ in the Definition~\ref{def:MT}.
Furthermore, as explained in \cite[Remark~2.2]{PW}, the choice of $\Omega_f^+$ ensures that $\theta_{n,f}\in\Zp[\cG_n]$.
\end{remark}

By the modularity theorem, if $E/\QQ$ is an elliptic curve of conductor $N_E$, then its $L$-function coincides with a unique normalized eigenform $f_E\in S_2(\Gamma_0(N_E))$. We let $\theta_{n,E}$ denote the Mazur--Tate element  $\theta_{n,f_E}$.

We now recall  the definitions of  Iwasawa $\mu$ and $\lambda$ invariants attached to $\theta_{n,f}$. For further discussion on this topic, we invite the reader to consult \cite[\S4]{pollack05} or \cite[\S3.1]{PW}. 
Given an element $F\in \Zp[\cG_n]$, we choose a generator $\gamma_n$ of the Galois group $\cG_n$. We may write $F$ as a polynomial $\sum_{i=0}^{p^n-1}a_iX^i$, where $X=\gamma_n-1$.

\begin{defn}\label{def:mu-lambda}
For a non-zero element $F =\sum_{i=0}^{p^n-1} a_i X^i\in \Zp[\cG_n]$, we define the mu and lambda invariants of $F$ by
\begin{align*}
    \mu(F)& = \min\limits_{i} \ord_p (a_i),\\
    \lambda (F)& = \min \{i:\ord_p (a_i) = \mu (L) \},
\end{align*}
where $\ord_p$ denotes the $p$-adic valuation on $\ZZ$. When $F=0$, we set $$\mu(F)=\lambda(F)=\infty.$$
\end{defn}
\begin{remark}
 The definitions above are independent of the choice of the generator $\gamma_n$. 
\end{remark}
We explain the strategy we use to compute these Iwasawa invariants for $\theta_{n,f}$ in the case $p$ is odd. When $p=2$, the strategy is very similar.

Explicitly, given 
\[
\Theta_{n,f}=\sum_{a \in (\ZZ / p^{n+1} \ZZ)^\times} C_a \cdot \sigma_a\in\CC[G_{n+1}],
\]
we obtain its projection $\tilde\Theta_{n,f}\in\CC[\cG_n]$ via
\[
\tilde\Theta_{n,f}=\sum_{a \in (\ZZ / p^{n+1} \ZZ)^\times} \frac{C_a}{\omega(a)} \cdot \pi_n(\sigma_a),
\]
where $\omega:G_n\rightarrow \Zp^\times$ is the Teichmüller character. Once we fix a generator $\gamma_n$ of $G_n$, we may write $\pi_n(\sigma_n)=(1+X)^{a'}$ for some integer $a'\in\{0,1,\ldots,p^n-1\}$. This gives
\[
\theta_{n,f}=\frac{1}{\Omega_f^+}\cdot\sum_{a \in (\ZZ / p^{n+1} \ZZ)^\times} \frac{C_a}{\omega(a)} \cdot (1+X)^{a'},
\]
which is the formula we use in our numerical calculations below.

When $\theta_{n,E}$ is non-zero, we may multiply $\theta_{n,E}$ by an appropriate constant so that its coefficients as a polynomial in $X$ are not divisible by $p$ simultaneously. If we write $\tilde{\theta}_{n,E}$ for this scaled polynomial, we may calculate $\lambda(\theta_{n,f})$ by finding the degree of the polynomial $\Fp[X]$ obtained from $\tilde{\theta}_{n,E}$ modulo $p$.

For the elliptic curves we consider in our calculations,  we may  work with a particular choice of $E'$ in the isogeny class containing $E$ where $\tilde{\theta}_{n,E'}=\theta_{n,E'}$. Indeed, $\lambda$-invariants are constant in an isogeny class and it is  conjectured that there always exists an $E'$ in any given isogeny class satisfying $\mu(\theta_{n,E'})=0$ (see \cite[Conjecture~1.11]{greenberg} in the good ordinary case and \cite[Conjecture~7.1]{PR} in the good supersingular case).

We are interested in $p$-adic Mazur-Tate elements mostly because they are closely related to the L-function. More precisely, $p$-adic Mazur-Tate elements satisfy the following interpolation property as pointed out in \cite[\S2]{PW}.

\begin{theorem}\label{thm:interpol}
Let $\chi$ be a  Dirichlet character factoring through $\cG_n$, but not $\cG_{n-1}$, where $n\ge1$ is an integer. Let $\theta_{n,f}$ be the $p$-adic Mazur-Tate element as defined in Definition~\ref{def:MT}. Then, $$\chi(\theta_{n,f}) = \tau(\chi) \frac{L(f, \chi^{-1}, 1)}{\Omega^{+}_f},$$ where $\tau(\chi)$ is the Gauss sum of $\chi$.
\end{theorem}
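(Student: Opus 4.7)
The plan is to unfold the definitions, reduce the sum defining $\chi(\tilde\Theta_{n,f})$ to an integral expression via Definition~\ref{def:modsymb}, and then recognize it as a twisted $L$-value through the classical Birch--Manin--Stevens formula, broadly following the exposition in \cite[\S2]{PW}. The starting point is $\chi(\theta_{n,f}) = \chi(\tilde\Theta_{n,f})/\Omega_f^+$. Since $\chi$ factors through $\cG_n$ but not $\cG_{n-1}$, viewing $\chi$ as a Dirichlet character via $\pi_n$ produces a primitive character of conductor $p^{n+1}$ that is trivial on the tame part $\Delta = \Gal(\QQ(\mu_p)/\QQ)$. Applying $\chi$ to the explicit formula for $\tilde\Theta_{n,f}$ and substituting
\[
\varphi_f(\{\infty\} - \{a/p^{n+1}\})\big|_{(X,Y)=(0,1)} = 2\pi i \int_{a/p^{n+1}}^{\infty} f(z)\,dz
\]
reduces the problem to evaluating
\[
\sum_{a \in (\ZZ/p^{n+1}\ZZ)^\times} \chi(a)\, \cdot\, 2\pi i \int_{a/p^{n+1}}^{\infty} f(z)\, dz,
\]
up to a normalization factor arising from the Teichmüller twist in the definition of $\tilde\Theta_{n,f}$.

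The second step is to relate this character-weighted sum of modular symbols to $L(f,\chi^{-1},1)$ via Fourier inversion. For $\chi$ primitive of conductor $M = p^{n+1}$, the Gauss-sum identity $\bar\chi(m)\tau(\chi) = \sum_a \chi(a) e^{2\pi i am/M}$ lets one express the $\chi^{-1}$-twist of $f$ as
\[
f_{\chi^{-1}}(z) = \frac{1}{\tau(\chi)}\sum_{a \in (\ZZ/M\ZZ)^\times} \chi(a)\, f\!\left(z + \frac{a}{M}\right).
\]
Integrating from $0$ to $i\infty$ term by term and invoking the Mellin identity $L(f,\chi^{-1},1) = -2\pi i \int_0^{i\infty} f_{\chi^{-1}}(z)\, dz$ then yields
\[
\tau(\chi)\, L(f,\chi^{-1},1) = -\sum_a \chi(a)\, \cdot\, 2\pi i \int_{a/M}^{\infty} f(z)\, dz,
\]
which is precisely the sum above (up to a controllable sign).

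The main obstacle is bookkeeping of normalizations rather than any substantial analytic content. One has to track the Teichmüller factor $\omega(a)^{-1}$ that enters the explicit expression for $\tilde\Theta_{n,f}$, verify that the Gauss sum of the Dirichlet character associated to $\chi$ via $\pi_n$ matches $\tau(\chi)$, and confirm that dividing by $\Omega_f^+$---defined so that $\theta_{n,f}$ captures the $+1$-eigenspace of the modular-symbol involution and lies in $\Zp[\cG_n]$---produces the correct $p$-adic rational quantity rather than a transcendental one. Once these conventions are pinned down, following the discussion in \cite[\S2]{PW}, the interpolation formula drops out. The case $p = 2$ requires only cosmetic modifications to the argument, adapted to the slightly different definitions of $G_n$ and $\cG_n$ given in Definition~\ref{def:MT}.
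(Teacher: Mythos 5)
Your proposal is correct and is essentially the paper's own proof: the paper simply cites \cite[\S2]{PW} and \cite[\S8]{MT}, and the computation you sketch (Gauss-sum inversion to express $f_{\chi^{-1}}$, the Mellin identity, and the shift of the integration path to recover the modular symbols at the cusps $a/p^{n+1}$) is exactly the standard argument carried out in those references. The one point worth making explicit, which is the only substantive remark the paper adds, is that $\chi$ is an \emph{even} character (being trivial on $\sigma_{-1}$), so that only the $+1$-eigencomponent $\varphi_f^+$ survives in the sum $\sum_a \chi(a)C_a$ and the single period $\Omega_f^+$ indeed suffices; your remaining caveats (the Teichm\"uller bookkeeping and the orientation/Gauss-sum sign) are genuine convention issues but are controllable as you say.
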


\begin{proof}
See \cite[\S2]{PW} and \cite[\S8]{MT}. Note that $\chi$ is an even character, which is why we always have $\Omega_f^+$ in the denominator.
\end{proof}
\begin{remark}\label{rk:nonzero}
Let $k$ denote the weight of $f$. When $k\ge3$, $L(f, \chi^{-1}, 1)\ne0$ for all $\chi$ (by the functional equation, it is a non-zero multiple of $L(f,\chi,k-1)$, which is non-zero since it can be expressed as an Euler product).  In particular, Theorem~\ref{thm:interpol} implies that $\theta_{n,f}\ne0$  for all $n\ge1$. When $k=2$, the main result of \cite{Roh} tells us that $L(\overline{f}, \bar{\chi}, 1)\ne0$ for  all but finitely many $\chi$. Thus, Theorem~\ref{thm:interpol} implies that $\theta_{n,f}\ne0$ for $n\gg0$.
\end{remark}

\section{Mazur--Tate elements of Ramanujan's tau function at {$p=2$ and $3$}}\label{S:3MT}

We  link the $\lambda$-invariants of the Mazur--Tate elements attached to $\Delta$ at $p=2$ and $3$ to those attached to certain elliptic curves defined over $\QQ$ with additive reduction at $p$. Throughout this section, we fix $p\in\{2,3\}$ and $N\in\{27,32\}$  so that $p|N$. In other words, $(p,N)=(2,32)$ or $(3,27)$. 

According to the online database LMFDB, the complex vector space $S_2(\Gamma_0(N))$ is of dimension one. Let $f_N$ denote the unique normalized cuspform in this space. All elliptic curves of conductor $N$ that are defined over $\QQ$ have to correspond to this modular form under the modularity theorem.

For both choices of $N$, there is one single isogeny class and four isomorphism classes of  elliptic curves of conductor $N$ that are defined over $\QQ$.  At least one of the isomorphism classes admit non-trivial $p$-torsions over $\QQ$. Thus, Theorem~\ref{thm:cong} says that $a_\ell(E)\equiv\tau(\ell) \mod p$ for all $\ell\ne p$. Recall that $\tau(p)\equiv 0\mod p$. Furthermore, since $E$ has additive reduction at $p$, we have $a_p(E)=0$. Thus,
\[
\tau(p)\equiv a_p(E)\mod p.
\]
Consequently, all Fourier coefficients of $\Delta$ and $f$ are congruent modulo $p$ and  we have 
\begin{equation}
f\equiv \Delta\mod p
\label{eq:cong-Delta-3}    
\end{equation}
as modular forms. 
\begin{remark}
We thank the referee for pointing out to us that \eqref{eq:cong-Delta-3} may be obtained directly in the following way. Let $$\eta(z)=q^{\frac{1}{24}}\prod_{n\ge1}(1-q^n)$$ be the Dedekind eta function. Then,
\begin{align*}
f_{32}=\eta(4z)^2\eta(8z)^2=q\prod_{n\ge1}(1-q^{4n})^2(1-q^{8n})^2\equiv q\prod(1-q^n)^{24}=\Delta\mod 2;\\
f_{27}=\eta(3z)^2\eta(9z)^2=q\prod_{n\ge1}(1-q^{3n})^2(1-q^{9n})^2\equiv q\prod(1-q^n)^{24}=\Delta\mod 3.    
\end{align*}
\end{remark}

The congruence \eqref{eq:cong-Delta-3} allows us to prove Theorem~\ref{thm:B}:

\begin{theorem}\label{thm:Delta3}
Let $(p,N)=(2,32)$ or $(3,27)$, $n\ge1$ and $E$ an elliptic curve defined over $\QQ$ of conductor $N$. If $\mu(\theta_{n,\Delta})=0$, then
\[
\lambda(\theta_{n,\Delta})=\lambda(\theta_{n,E}).
\]
\end{theorem}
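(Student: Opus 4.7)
My strategy is to lift the modular form congruence $f_E\equiv\Delta\pmod p$ from \eqref{eq:cong-Delta-3} to a congruence of Mazur--Tate elements. Concretely, I aim to establish that
\[
\theta_{n,\Delta}\equiv u\cdot\theta_{n,E}\pmod{p\Zp[\cG_n]}
\]
for some unit $u\in\Zp^\times$ (possibly depending on $n$). Once this congruence is in hand, the conclusion is immediate: the hypothesis $\theta_{n,\Delta}\notin p\Zp[\cG_n]$ forces the reduction $\bar{\theta}_{n,\Delta}\in\Fp[X]$ to be nonzero, and multiplication by a unit $\bar u\in\Fp^\times$ preserves both the nonvanishing and the degree, so $\bar{\theta}_{n,E}$ is a polynomial of the same degree as $\bar{\theta}_{n,\Delta}$. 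By Definition~\ref{def:mu-lambda} this common degree is the $\lambda$-invariant of each side.

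To produce the congruence of Mazur--Tate elements I would follow the template of Pollack--Weston \cite{PW}. By Definition~\ref{def:MT} the coefficients of $\Theta_{n,f}$ only involve the values $\varphi_f(\{\infty\}-\{a/p^{n+1}\})|_{(X,Y)=(0,1)}$, so we need to compare the modular symbols of $\Delta$ and of $f_E$ after applying the linear functional ``evaluation at $(0,1)$'' on $V_{k-2}$. The plan is to fix $\Zp$-integral structures on the $+$-parts of both modular-symbol spaces and to normalize the periods $\Omega_\Delta^+$ and $\Omega_{f_E}^+$ so that $\varphi_\Delta^+/\Omega_\Delta^+$ and $\varphi_{f_E}^+/\Omega_{f_E}^+$ become $\Zp$-generators of the respective integral eigenspaces. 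Because $a_\ell(f_E)\equiv\tau(\ell)\pmod p$ for every prime $\ell$ (by \eqref{eq:cong-Delta-3}), these two normalized symbols give rise to the same system of Hecke eigenvalues modulo $p$. A mod $p$ multiplicity-one argument then forces their reductions modulo $p$ to lie in a common one-dimensional residual eigenspace, hence to coincide up to a unit scalar. Substituting this information into the sum defining $\Theta_{n,f}$, projecting along $\pi_n$, and dividing by the period produces the desired congruence.

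The principal obstacle is that $\Delta$ and $f_E$ have different weights ($12$ versus $2$) and different levels ($1$ versus $N$), so their modular symbols live in genuinely different cohomology groups and standard multiplicity-one results do not directly compare them. To overcome this I would first level-raise $\Delta$ to $\Gamma_0(N)$, and then implement a cross-weight comparison in the style of \cite[\S4]{PW}: because $p-1$ divides $k-2=10$ in our cases, the extraction of the coefficient of $Y^{k-2}$ can be organized into a map on integral mod $p$ cohomologies that is Hecke-equivariant away from $p$, allowing the comparison between weight $12$ and weight $2$ symbols. The remaining technical check is that the residual representation is non-Eisenstein and the corresponding eigenspace of the mod $p$ Hecke algebra at tame level $N$ is one-dimensional; this is the subtle step, but it should follow in our specific setting from the one-dimensionality of $S_2(\Gamma_0(N))$ for $N\in\{27,32\}$ together with a direct inspection of the residual representation of $\Delta$ at $p\in\{2,3\}$.
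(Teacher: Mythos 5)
Your proposal follows essentially the same route as the paper: the paper also uses the Hecke-equivariant weight-lowering/level-raising map of \cite[\S7]{PW} (a composition $H^1_c(\Gamma_0(1),V_{10}(\Zp))\to H^1_c(\Gamma_0(N),\Fp)$, call it $\alpha'$), the full congruence \eqref{eq:cong-Delta-3}, and the identification of $\alpha'(\ovp_\Delta)$ with $\ovp_{f_N}$ to deduce $\theta_{n,\Delta}\equiv\theta_{n,E}\not\equiv0\bmod p\Zp[\cG_n]$ and hence the equality of $\lambda$-invariants.

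One step in your write-up needs repair, and it is precisely the point where the hypothesis $\mu(\theta_{n,\Delta})=0$ enters. You argue that a mod $p$ multiplicity-one statement forces the two normalized symbols to ``coincide up to a unit scalar,'' and you invoke the hypothesis only afterwards, to see that the reduced Mazur--Tate element is nonzero. But the comparison map is far from injective (it involves extracting a single coefficient of $V_{10}(\Fp)$), so multiplicity one only tells you that $\alpha'(\ovp_\Delta)$ is \emph{some} scalar multiple of $\ovp_{f_N}$ --- possibly zero --- even though $\ovp_\Delta\ne0$. If $\alpha'(\ovp_\Delta)=0$, your claimed congruence $\theta_{n,\Delta}\equiv u\,\theta_{n,E}$ with $u\in\Zp^\times$ is simply false. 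The paper resolves this by first using \cite[Lemma~4.6]{PW}, which says $\vartheta_n(\alpha'(\ovp_\Delta))\equiv\theta_{n,\Delta}\bmod p\Zp[\cG_n]$: the assumption $\theta_{n,\Delta}\notin p\Zp[\cG_n]$ then forces $\alpha'(\ovp_\Delta)\ne0$, and only then does Hecke equivariance plus \eqref{eq:cong-Delta-3} identify it with $\ovp_{f_N}$. So you should move the use of the hypothesis upstream, from the final degree comparison to the nonvanishing of the image under the comparison map; with that reordering your argument matches the paper's.
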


\begin{proof}
 We define $\alpha'$ to be the composition
 \[
 H^1_c(\Gamma_0(1),V_{10}(\Zp))\stackrel{\alpha}\rightarrow H^1_c(\Gamma_0(N),\Zp/N\Zp)\rightarrow H^1_c(\Gamma_0(N),\Fp),
 \]
where $\alpha$ is the Hecke equivariant map defined as in \cite[\S7]{PW} and the second arrow is given by the natural projection map.

Recall that  $f_N\in S_2(\Gamma_0(N))$ denotes the  modular form  corresponding to $E$. Let us write $\ovp_\Delta\in H^1_c(\Gamma_0(1),V_{10}(\Fp))$ (resp. {$\ovp_{f_N}\in H^1_c(\Gamma_0(p^r),\Fp)$}) for the image of $\vp_\Delta/\Omega_\Delta^+$ (resp. {$\vp_{f_N}/\Omega_{f_N}^+$}) modulo $p$. {The choice of periods  ensures that $\ovp_\Delta$ and $\ovp_{f_N}$} are both non-zero (see \cite[Definition~2.1]{PW}).
 
 Since we have assumed that $\theta_{n,\Delta}\not\in p\Zp[\cG_n]$ and that
 \[
 \vartheta_n(\alpha'(\ovp_\Delta))\equiv \theta_{n,\Delta}\mod p\Zp[G_n]
 \]
 by  \cite[Lemma~4.6]{PW} (here, $\vartheta$ is defined as in \cite[(2) on P.357]{PW}), it follows  that $\alpha'(\ovp_\Delta)\ne0$.  Furthermore, as $\alpha'$ is Hecke equivariant, \eqref{eq:cong-Delta-3} implies that
  $$\alpha'(\ovp_\Delta) = \ovp_{f_N}.$$  Hence, applying \cite[Lemma~4.6]{PW} once more gives
  \[
  \theta_{n,\Delta}\equiv \theta_{n,f}\not\equiv 0\mod p\Zp[\cG_n],
  \]
 from which the desired equality of $\lambda$-invariants follows.
\end{proof}
When $p=2$, our numerical investigations show that  $\mu(\theta_{n,\Delta})$ is not always zero. Nonetheless, it turns out that the conclusion of Theorem~\ref{thm:Delta3} still holds for $n>3$, with
$$\lambda(\theta_{n,\Delta}) = \lambda(\theta_{n,E}) = 2^{n-1} - 2.$$
When $p=3$, our numerical investigations have led us to believe that the hypothesis $\theta_{n,\Delta}\notin p\Zp[\cG_n]$ is always true. We have found that $$\lambda(\theta_{n,\Delta}) = \lambda(\theta_{n,E}) = 3^n - 2$$ for all values of $n$ that we have studied (see \S6).

\section{Mazur--Tate elements at additive primes}
Let $E/ \QQ$ be an elliptic curve having additive reduction at a fixed prime $p$. Other than the the settings where the works of Delbourgo \cite{del-compositio,del-JNT} apply, it is not known how to define a $p$-adic L-function that would interpolate the complex $L$-values of $E$. Nonetheless, it is possible to compute $p$-adic Mazur--Tate elements of level $n$ attached to $E$ as given in Definition~\ref{def:MT}. Interestingly, the calculations we made (see \S\ref{sec:data}) show that the lambda invariants of such elements behave in a surprisingly regular manner, even though  we do not know whether such patterns can be explained using Iwasawa-theoretic objects. In this section, we  prove a  theoretical lower bound on these lambda invariants (see Corollary~\ref{cor:lambda-add} below). We note that this lower bound is attained by the curves     $45a, 63a, 72a,90c,99a,99b,99d$ when $p=3$, $150a $ when $p=5$ and $147c,294b$ when $p=7$ (see Tables \ref{tab:table1}-\ref{tab:table3} in \S\ref{sec:data}).

We  recall the following norm relation satisfied by the Mazur--Tate elements.

\begin{defn}\label{def:pi}
 We denote by $\cor_n^{n+1}: \ZZ[G_{n+1}] \to \ZZ[G_{n}]$ the natural projection map.
\end{defn}

\begin{lemma}\label{lem:tau27}
Let $E/\QQ$ be an elliptic curve of conductor $N_E$ and denote by $\theta_{n,E}$ its associated Mazur--Tate element as given in Definition~\ref{def:MT}.  If $p \big | N_E$ and $m\ge 1$, then $$ \cor_{m}^{m+1} (\theta_{m+1}) = a_{p}(E) \cdot \theta_m .$$
\end{lemma}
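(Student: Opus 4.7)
The plan is to unravel Definition~\ref{def:MT} on both sides of the claimed identity and reduce it to the $U_p$-eigenform property of $f_E$. I would present the argument first for $p$ odd; the case $p=2$ follows the same template with $p^{m+3}$ in place of $p^{m+2}$ and the obvious modification of the fixed-field convention. Writing out the definition,
$$\theta_{m+1,E} = \frac{1}{\Omega_{f_E}^+}\sum_{a \in (\ZZ/p^{m+2}\ZZ)^\times} \frac{\vp_{f_E}(\{\infty\} - \{a/p^{m+2}\})\big|_{(X,Y)=(0,1)}}{\omega(a)}\cdot \pi_{m+1}(\sigma_a),$$
applying $\cor_m^{m+1}$ collapses the group-ring part and allows us to regroup the sum according to the fibers of the projection $\cG_{m+1}\twoheadrightarrow\cG_m$. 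The lifts of a class $b \in (\ZZ/p^{m+1}\ZZ)^\times$ to $(\ZZ/p^{m+2}\ZZ)^\times$ are exactly $\{b + jp^{m+1} : j=0,1,\ldots,p-1\}$, and since $\omega$ factors through $(\ZZ/p\ZZ)^\times$, each of these lifts satisfies $\omega(b+jp^{m+1}) = \omega(b)$, so the Teichm\"uller factor may be pulled outside the inner sum.

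Next I would recognize the resulting inner sum as an instance of the $U_p$-operator. Setting $\gamma_j = \bigl(\begin{smallmatrix}1 & j \\ 0 & p\end{smallmatrix}\bigr)$, one has $\gamma_j\cdot\infty = \infty$ and
$$\gamma_j\cdot (b/p^{m+1}) \;=\; b/p^{m+2} + j/p \;=\; (b+jp^{m+1})/p^{m+2},$$
so the matrices $\gamma_0,\ldots,\gamma_{p-1}$ transport $\{\infty\}-\{b/p^{m+1}\}$ to the very divisors appearing in the inner sum, and they are the standard coset representatives defining the action of $U_p$ on modular symbols. Since $p \mid N_E$, $f_E$ is a $U_p$-eigenform with eigenvalue $a_p(E)$, which translates on the modular-symbol side to $\vp_{f_E}\mid U_p = a_p(E)\vp_{f_E}$. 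Because $f_E$ has weight $2$, the target $V_0(\CC) = \CC$ carries the trivial $\gamma_j$-action, so the specialization at $(X,Y)=(0,1)$ commutes with the $U_p$-action and we obtain
$$\sum_{j=0}^{p-1} \vp_{f_E}\bigl(\{\infty\} - \{(b+jp^{m+1})/p^{m+2}\}\bigr)\big|_{(X,Y)=(0,1)} \;=\; a_p(E)\cdot \vp_{f_E}\bigl(\{\infty\}-\{b/p^{m+1}\}\bigr)\big|_{(X,Y)=(0,1)}.$$
Reassembling the sum over $b\in(\ZZ/p^{m+1}\ZZ)^\times$ and dividing by $\Omega_{f_E}^+$ produces $a_p(E)\cdot\theta_{m,E}$ on the right, as desired.

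The main obstacle is bookkeeping rather than conceptual: one must line up the combinatorics of the fibers of $\cG_{m+1}\to\cG_m$ with the coset decomposition defining $U_p$, and in the $p=2$ case check that the shift between $p^{n+1}$ and $p^{n+2}$ in Definition~\ref{def:MT}, together with the passage to the fixed field of $\sigma_{-1}$, does not disturb the argument (it does not, since the lifts of a class in $\cG_m$ to $\cG_{m+1}$ still form a single $p$-element fiber on which $\omega$ is constant). Once these conventions are pinned down, the lemma follows directly from the $U_p$-eigenform property of any newform at a prime dividing its level.
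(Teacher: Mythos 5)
Your proof is correct. The paper itself does not supply an argument here: it simply cites \cite[\S1.3]{MT} for the norm relation, so your proposal fills in the computation that the citation stands for. Your route is the standard one underlying the Mazur--Tate three-term relation: unravel Definition~\ref{def:MT}, fibre the sum over the projection $(\ZZ/p^{m+2}\ZZ)^\times\twoheadrightarrow(\ZZ/p^{m+1}\ZZ)^\times$, check that the Teichm\"uller factor is constant on each fibre, and identify the inner sum with the coset decomposition $\Gamma_0(N)\bigl(\begin{smallmatrix}1&0\\0&p\end{smallmatrix}\bigr)\Gamma_0(N)=\bigsqcup_j\Gamma_0(N)\gamma_j$ defining $U_p$. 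The two points you flag are exactly the right ones to flag: since $p\mid N_E$ the relevant operator is $U_p$ rather than $T_p$, which is why the relation has only two terms (no $\theta_{m-1}$ contribution from the extra coset $\bigl(\begin{smallmatrix}p&0\\0&1\end{smallmatrix}\bigr)$); and since $f_E$ has weight $2$ the coefficient module $V_0$ is trivial, so the specialization at $(X,Y)=(0,1)$ is harmless. The only cosmetic caveat is that the paper's $\cor_m^{m+1}$ is stated on $\ZZ[G_{n+1}]\to\ZZ[G_n]$ but applied to elements of $\Zp[\cG_{m+1}]$; your reading of it as the projection induced by $\cG_{m+1}\twoheadrightarrow\cG_m$ is the intended one and is what makes the fibre-by-fibre regrouping legitimate.
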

\begin{proof}
See \cite[\S1.3]{MT}.
\end{proof}
In the case of $p$ being an additive prime, it has the following consequence on the $\lambda$-invariant of the Mazur--Tate elements.
\begin{corollary}\label{cor:lambda-add}
Let $E/\QQ$ be an elliptic curve with additive reduction at $p$. Then $$\lambda(\theta_{n,E}) \ge p^{n-1}$$
for all $n\ge1$.
\end{corollary}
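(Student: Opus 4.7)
The plan is to exploit the vanishing $a_p(E) = 0$ (which holds because $E$ has additive reduction at $p$) in the norm relation of Lemma~\ref{lem:tau27}, forcing the lowest $p^{n-1}$ Iwasawa coefficients of $\theta_{n,E}$ to be strictly more $p$-divisible than the rest.

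First, for $n \ge 2$, I would apply Lemma~\ref{lem:tau27} with $m = n-1$ to obtain $\cor_{n-1}^{n}(\theta_{n,E}) = a_p(E)\cdot \theta_{n-1,E} = 0$. Then I would fix a generator $\gamma$ of $\cG_n$ whose image generates $\cG_{n-1}$, set $X = \gamma - 1$, and use the isomorphisms
\[
\Zp[\cG_n] \cong \Zp[X]/\bigl((1+X)^{p^n}-1\bigr), \qquad \Zp[\cG_{n-1}] \cong \Zp[X]/\bigl((1+X)^{p^{n-1}}-1\bigr),
\]
under which $\cor_{n-1}^{n}$ corresponds to reduction modulo $(1+X)^{p^{n-1}}-1$. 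Writing $\theta_{n,E}=\sum_{i=0}^{p^n-1} a_i X^i$ for the unique representative of degree $<p^n$, the vanishing of the corestriction, combined with polynomial division by the monic polynomial $(1+X)^{p^{n-1}}-1$, gives a genuine factorisation
\[
\sum_{i=0}^{p^n-1} a_i X^i = \bigl((1+X)^{p^{n-1}}-1\bigr)\, Q(X)
\]
in $\Zp[X]$ for some $Q(X) \in \Zp[X]$.

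Reducing this identity modulo $p$ and using $(1+X)^{p^{n-1}} \equiv 1 + X^{p^{n-1}} \pmod{p}$, I would conclude that $X^{p^{n-1}}$ divides $\sum_{i} \bar a_i X^i$ in $\Fp[X]$, so that $a_i \equiv 0 \pmod p$ for every $0 \le i < p^{n-1}$. To convert this into a bound on $\lambda$ independent of the value of $\mu(\theta_{n,E})$, I would run the same argument on $\theta_{n,E}/p^{\mu(\theta_{n,E})}$: it still lies in $\Zp[\cG_n]$ with vanishing corestriction, and it has $\mu$-invariant zero, so at least one of its coefficients of index $\ge p^{n-1}$ must be a unit. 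Since $\lambda$ is invariant under scaling by powers of $p$, this forces $\lambda(\theta_{n,E})\ge p^{n-1}$; the case $\theta_{n,E}=0$ is immediate since $\lambda(0)=\infty$.

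The main (and essentially only) obstacle I anticipate is the boundary case $n=1$, where Lemma~\ref{lem:tau27} as stated does not directly apply (it requires $m\ge 1$). I would treat this case separately by analysing the augmentation of $\theta_{1,E}$, either through an extension of the norm relation down to the trivial level or via the interpolation property of Theorem~\ref{thm:interpol} evaluated at the trivial character; apart from this edge case, the argument only involves routine manipulations in $\Zp[X]$.
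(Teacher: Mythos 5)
Your proposal is correct and follows essentially the same route as the paper: both use Lemma~\ref{lem:tau27} with $a_p(E)=0$ to deduce that $\theta_{n,E}$ is divisible by $\omega_{n-1}=(1+X)^{p^{n-1}}-1$, whose reduction mod $p$ is $X^{p^{n-1}}$, forcing $\lambda(\theta_{n,E})\ge p^{n-1}$. You are in fact somewhat more careful than the printed proof, which silently invokes additivity of $\lambda$ under products and does not comment on the $\mu$-normalisation or the $n=1$ edge case that you flag.
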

\begin{proof}
 Lemma~\ref{lem:tau27} tells us that that $\cor_n^{n+1}(\theta_n(f_{E})) = 0$ for all $n \geq 1$ since $a_p(E)=0$ when $E$ has additive reduction at $p$. This implies that $\theta_{n+1} = g_n \cdot \omega_n${, where $\omega_n=(1+X)^{p^n}-1$ and}  $g_n \in \ZZ_p [{X} ]$. So, $\lambda(\theta_{n+1}) = \lambda(g_n) + p^n\ge p^n$ as required.
\end{proof}
The same is true for the Mazur--Tate elements attached to $\Delta$ at  $p=3$:
\begin{corollary}
At $p=2$ or $3$, if $\mu(\theta_{n,\Delta})=0$, we have
\[
\lambda(\theta_{n,\Delta})\ge p^{n-1}
\]
for all $n\ge1$.
\end{corollary}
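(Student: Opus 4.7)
The plan is to obtain this corollary as an immediate combination of the two preceding results in the excerpt, namely Theorem~\ref{thm:Delta3} and Corollary~\ref{cor:lambda-add}. The hypothesis $\mu(\theta_{n,\Delta})=0$ is by Definition~\ref{def:mu-lambda} equivalent to the statement $\theta_{n,\Delta}\notin p\Zp[\cG_n]$, which is precisely the hypothesis of Theorem~\ref{thm:Delta3}. So I would begin by translating the $\mu$-invariant hypothesis into the non-divisibility condition on which Theorem~\ref{thm:Delta3} is phrased.

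Next, for the given $p\in\{2,3\}$, I would set $N=32$ or $N=27$ respectively, and choose any elliptic curve $E/\QQ$ of conductor $N$ (such curves exist, as discussed at the start of \S\ref{S:3MT}). Theorem~\ref{thm:Delta3} then gives the identity $\lambda(\theta_{n,\Delta})=\lambda(\theta_{n,E})$. Since these elliptic curves have additive reduction at $p$ (the conductor $N$ is divisible by $p^2$ in both cases, so the reduction is forced to be additive), Corollary~\ref{cor:lambda-add} applies to $E$ and yields $\lambda(\theta_{n,E})\ge p^{n-1}$. Chaining the two gives the claimed inequality.

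There is essentially no obstacle: the argument is just the concatenation of the two results, and the only minor point to verify is the equivalence of the hypotheses and the fact that the curves of conductor $27$ and $32$ really do have additive reduction at $3$ and $2$. I would write the proof as two sentences, one invoking Theorem~\ref{thm:Delta3} to pass from $\Delta$ to $E$, and one invoking Corollary~\ref{cor:lambda-add} to bound $\lambda(\theta_{n,E})$ from below by $p^{n-1}$.
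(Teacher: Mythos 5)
Your proposal is correct and matches the paper's proof exactly: the corollary is deduced by combining Theorem~\ref{thm:Delta3} with Corollary~\ref{cor:lambda-add}, using that the curves of conductor $32$ and $27$ have additive reduction at $2$ and $3$ respectively. The minor points you flag (the equivalence of $\mu(\theta_{n,\Delta})=0$ with $\theta_{n,\Delta}\notin p\Zp[\cG_n]$, and the additivity of the reduction) are indeed the only things to check, and you handle them correctly.
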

\begin{proof}
This follows from Theorem~\ref{thm:Delta3} and Corollary~\ref{cor:lambda-add}.
\end{proof}

\section{Numerical data}\label{sec:data}
In this section, we present a brief summary of the numerical results we have obtained. In Tables~\ref{tab:table0}, \ref{tab:table1}, \ref{tab:table2} and \ref{tab:table3}, we give the $\lambda$-invariants of Mazur--Tate elements attached to elliptic curves having additive reduction at a fixed prime $2$, $3$, $5$ and $7$ that we have computed respectively. We have found very uniform behaviour of these invariants. It seems to suggest that the Mazur--Tate elements we computed might be related to certain bounded $p$-adic $L$-functions attached to these elliptic curves. We plan to study this in a future project.

The following tables contain our computations of the Iwasawa $\lambda$-invariants of $p$-adic Mazur--Tate elements of level $n$ attached to elliptic curves having additive reduction at $p$. Since the Mazur--Tate elements are the same up to multiplication by a scalar for all elliptic curves in a given isogeny class, we organize  our data by isogeny class using Cremona label. In the last column, we indicate our predictions for $\lambda(\theta_m)$ for $m$ sufficiently large according to the values we computed.

The calculations we did were carried out on Sage modifying slightly Pollack's algorithm, available on \url{https://github.com/rpollack9974/OMS}. The codes we used for our computations can  be found at \url{https://github.com/anthonydoyon/Ramanujan-s-tau-and-MT-elts}.

In what follows, we write
$$q_m = \left\{
                \begin{array}{ll}
                  p^{m-1} - p^{m-2} + \dots + p - 1 \quad \text{if } m \text{ is even}\\
                  p^{m-1} - p^{m-2} + \dots + p^2 - p \quad \text{if } m \text{ is odd.}\\
                \end{array}
              \right. $$
For some specific isogeny classes, for instance, $153a, 153c, 225a$ and $225b$,  we could not find one single formula for $\lambda(\theta_m)$ in terms of $m$, but rather, two separate formulas depending on the parity of $m$, involving $q_m$. The term $q_m$ appears naturally for elliptic curves with good supersingular reduction at $p$  where the $\theta$ elements are related to Pollack's plus and minus $p$-adic $L$-functions defined in \cite{pollack03} (see \cite[\S4.1]{PW}). As given in Tables~\ref{tab:table5} and \ref{tab:table6}, these curves all have potentially supersingular reduction at $p$. This  suggests that the Mazur--Tate elements for these curves might be related to Pollack's plus and minus $p$-adic $L$-functions. Curiously, there are certain curves with potentially supersingular reduction whose Mazur--Tate elements do not exhibit such patterns. We will look for a theoretic explanation on how these two distinct cases arise in our follow-up project. 

\clearpage
\begin{table}[h!]
  \begin{center}
    \caption{$\lambda$-invariants for $p$-adic Mazur--Tate elements of level $n$ of some elliptic curves with additive reduction at $p = 2$.}
    \label{tab:table0}
    \begin{tabular}{l|l|l|l|l|l|l|l|l}
        \toprule
        \textbf{Isogeny class} &\boldmath $ n=1$ & \textbf{2} & \textbf{3} & \textbf{4} & \textbf{5} & \textbf{6} & \textbf{7} & \boldmath$m$\\
        \midrule
        \multirow{2}{*}{$20a$} & \multirow{2}{*}{$\infty$} & \multirow{2}{*}{$1$} & \multirow{2}{*}{$\infty$} & \multirow{2}{*}{$7$} & \multirow{2}{*}{$9$} & \multirow{2}{*}{$31$} & \multirow{2}{*}{$33$} & $2^{m-1}-1$ ($m$ even)\\
        & & & & & & & & $2^{m-2} + 1$ ($m$ odd)\\
      \midrule 
      $24a, 48a$ & $\infty$ & $1$ & $3$ & $7$ & $15$ & $31$ & $63$ & $2^{m-1} - 1$\\
      \midrule
      $32a$ & $\infty$ & $1$ & $2$ & $6$ & $14$ & $30$ & $62$ & $2^{m-1}-2$\\
      \midrule
      $36a, 56a$ & $\infty$ & $\infty$ & $2$ & $4$ & $8$ & $16$ & $32$ & $2^{m-2}$\\
      \midrule
      \multirow{2}{*}{$40a$} & \multirow{2}{*}{$\infty$} & \multirow{2}{*}{$\infty$} & \multirow{2}{*}{$3$} & \multirow{2}{*}{$\infty$} & \multirow{2}{*}{$15$} & \multirow{2}{*}{$17$} & \multirow{2}{*}{$63$} & $2^{m-2} + 1$ ($m$ even)\\
      & & & & & & & & $2^{m-1} - 1$ ($m$ odd)\\
      \midrule
      \multirow{2}{*}{$44a$} & \multirow{2}{*}{$\infty$} & \multirow{2}{*}{$1$} & \multirow{2}{*}{$3$} & \multirow{2}{*}{$5$} & \multirow{2}{*}{$11$} & \multirow{2}{*}{$21$} & \multirow{2}{*}{$43$} & $q_m$ ($m$ even)\\
      & & & & & & & & $q_m + 1$ ($m$ odd)\\
      \midrule
      \multirow{2}{*}{$52a$} & \multirow{2}{*}{$\infty$} & \multirow{2}{*}{$1$} & \multirow{2}{*}{$3$} & \multirow{2}{*}{$7$} & \multirow{2}{*}{$11$} & \multirow{2}{*}{$31$} & \multirow{2}{*}{$35$} & $2^{m-1} - 1$ ($m$ even)\\
      & & & & & & & & $2^{m-2} + 3$ ($m$ odd)\\
      \midrule
      $64a$ & $\infty$ & $1$ & $3$ & $4$ & $10$ & $22$ & $46$ & $3 \cdot 2^{m-3} - 2$\\
        \bottomrule 
    \end{tabular}
  \end{center}
\end{table}
\vspace{0.5cm}
\begin{table}[h!]
  \begin{center}
    \caption{$\lambda$-invariants for $p$-adic Mazur--Tate elements of level $n$ of some elliptic curves with additive reduction at $p = 3$.}
    \label{tab:table1}
    \begin{tabular}{l|l|l|l|l|l|l|l|l}
      \toprule 
      \textbf{Isogeny class} &\boldmath $ n=1$ & \textbf{2} & \textbf{3} & \textbf{4} & \textbf{5} & \textbf{6} & \textbf{7} & \boldmath$m$\\
      \midrule 
      $27a, 54a$ & $1$ & $7$ & $25$ & $79$ & $241$ & $727$ & $2185$ & $3^m - 2$\\
      \midrule
      $36a, 54b, 90a,$ & \multirow{2}{*}{$2$} & \multirow{2}{*}{$8$} & \multirow{2}{*}{$26$} & \multirow{2}{*}{$80$} & \multirow{2}{*}{$242$} & \multirow{2}{*}{$728$} & \multirow{2}{*}{$2186$} & \multirow{2}{*}{$3^m - 1$}\\
      $90b, 108a$ & & & & & & & &\\
      \midrule
      $45a, 63a, 72a,$ & \multirow{3}{*}{$1$} & \multirow{3}{*}{$3$} & \multirow{3}{*}{$9$} & \multirow{3}{*}{$27$} & \multirow{3}{*}{$81$} & \multirow{3}{*}{$243$} & \multirow{3}{*}{$729$} & \multirow{3}{*}{$3^{m-1}$}\\
      $90c, 99a, 99b,$ & & & & & & & &\\
      $99d$ & & & & & & & &\\
      \midrule
      $99c$ & $\infty$ & $6$ & $18$ & $54$ & $162$ & $486$ & $1458$ & $2 \cdot 3^{m-1}$\\
      \midrule
      \multirow{2}{*}{$153a$} & \multirow{2}{*}{$1$} & \multirow{2}{*}{$\infty$} & \multirow{2}{*}{$11$} & \multirow{2}{*}{$39$} & \multirow{2}{*}{$101$} & \multirow{2}{*}{$309$} & \multirow{2}{*}{$911$} & $3^{m-1} + q_{m-1} + 6$ ($m$ even)\\
      & & & & & & & & $3^{m-1} + q_{m-1}$ ($m$ odd)\\
      \midrule
      \multirow{2}{*}{$153c$} & \multirow{2}{*}{$\infty$}  & \multirow{2}{*}{$5$} & \multirow{2}{*}{$21$} & \multirow{2}{*}{$47$} & \multirow{2}{*}{$147$} & \multirow{2}{*}{$425$} & \multirow{2}{*}{$1281$} & $3^{m-1} + q_m$ ($m$ even)\\
      & & & & & & & & $3^{m-1} + q_m + 6$ ($m$ odd)\\
      \midrule
      $153d$ & $2$  & $6$ & $20$ & $60$ & $182$ & $546$ & $1640$ & $q_{m+1}$\\
      \bottomrule 
    \end{tabular}
  \end{center}
\end{table}
\clearpage
\begin{table}[h!]
  \begin{center}
    \caption{$\lambda$-invariants for $p$-adic Mazur--Tate elements of level $n$ of some elliptic curves with additive reduction at $p = 5$.}
    \label{tab:table2}
    \begin{tabular}{l|l|l|l|l|l|l}
      \toprule 
      \textbf{Isogeny class} & \boldmath $ n=1$& \textbf{2} & \textbf{3} & \textbf{4} & \textbf{5} & \boldmath$m$\\
      \midrule 
      $50b, 75c$ & $4$ & $24$ & $124$ & $624$ & $3124$ & $5^m - 1$\\
      \midrule
      $75b, 100a, 150c$ & $2$ & $10$ & $50$ & $250$ & $1250$ & $2 \cdot 5^{m-1}$\\
      \midrule
      $50a, 75a, 150b, 175c$ & $3$ & $15$ & $75$ & $375$ & $1875$ & $3 \cdot 5^{m-1}$\\
      \midrule
      $175b$ & $4$ & $12$ & $52$ & $252$ & $1252$ & $2 \cdot 5^{m-1} + 2$\\
      \midrule
      $175a$ & $2$ & $6$ & $26$ & $126$ & $626$ & $5^{m-1} + 1$\\
      \midrule
      $150a$ & $1$ & $5$ & $25$ & $125$ & $625$ & $5^{m-1}$\\
      \midrule
      \multirow{2}{*}{$225a$} & \multirow{2}{*}{$1$} & \multirow{2}{*}{$8$} & \multirow{2}{*}{$37$} & \multirow{2}{*}{$188$} & \multirow{2}{*}{$937$} & $5^{m-1} + 3 \cdot q_{m-1} + 3$ ($m$ even)\\
      & & & & & & $5^{m-1} + 3 \cdot q_{m-1}$ ($m$ odd)\\
      \midrule
      \multirow{2}{*}{$225b$} & \multirow{2}{*}{$4$} & \multirow{2}{*}{$17$} & \multirow{2}{*}{$88$} & \multirow{2}{*}{$437$} & \multirow{2}{*}{$2188$} & $3 \cdot 5^{m-1} + 3 \cdot q_{m-1} + 2$ ($m$ even)\\
      & & & & & & $3 \cdot 5^{m-1} + 3 \cdot q_{m-1} + 1$ ($m$ odd)\\
      \bottomrule 
    \end{tabular}
  \end{center}
\end{table}

\vspace{0.5cm}

\begin{table}[h!]
  \begin{center}
    \caption{$\lambda$-invariants for $p$-adic Mazur--Tate elements of level $n$ of some elliptic curves with additive reduction at $p = 7$.}
    \label{tab:table3}
    \begin{tabular}{l|l|l|l|l|l}
    \toprule 
      \textbf{Isogeny class} &\boldmath $ n=1$ & \textbf{2} & \textbf{3} & \textbf{4} & \boldmath$m$\\
      \midrule 
      $49a, 245b, 294e, 294f, 392b, 441a$ & $5$ & $35$ & $245$ & $1715$ & $5 \cdot 7^{m-1}$\\
      \midrule
      $98a, 147a, 294c, 392d$ & $3$ & $21$ & $147$ & $1029$ & $3 \cdot 7^{m-1}$\\
      \midrule
      $147b, 196b, 294a, 392e, 441e$ & $4$ & $28$ & $196$ & $1372$ & $4 \cdot 7^{m-1}$\\
      \midrule
      $147c, 294b$ & $1$ & $7$ & $49$ & $343$ & $7^{m-1}$\\
      \midrule
      $245a, 294d, 294g, 441d$ & $2$ & $14$ & $98$ & $686$ & $2 \cdot 7^{m-1}$\\
      \midrule
      $196a, 392f$ & $2$ & $8$ & $50$ & $344$ & $7^{m-1} + 1$\\
      \midrule
      $245c, 392a, 441c$ & $4$ & $22$ & $148$ & $1030$ & $3 \cdot 7^{m-1} + 1$\\
      \midrule
      $392c, 441b$ & $3$ & $15$ & $99$ & $687$ & $2 \cdot 7^{m-1} + 1$\\
      \midrule
      $441f$ & $3$ & $9$ & $51$ & $345$ & $7^{m-1}+2$\\
    \bottomrule 
    \end{tabular}
  \end{center}
\end{table}

\clearpage

In the following table, we give the values of $\lambda$-invariants of the Mazur--Tate elements attached to $\Delta$ at the non-ordinary primes $p=3,5,7$ that we have been able to compute, We note that the values for $p = 3$ agree with those for the isogeny classes 27a and 54a in Table~\ref{tab:table1}, as predicted by Theorem~\ref{thm:Delta3}.

\vspace{0.5cm}

\begin{table}[h!]
  \begin{center}
    \caption{$\lambda$-invariants of $p$-adic Mazur--Tate elements of level $n$ associated to $\Delta$.}
    \label{tab:table4}
    \begin{tabular}{l|l|l|l|l|l|l|l|l}
    \toprule 
      \boldmath$p$ & \boldmath $ n=1$ & \textbf{2} & \textbf{3} & \textbf{4} & \textbf{5} & \textbf{6} & \textbf{7} & \boldmath$m$\\
      \midrule
      $2$ & $0$ & $1$ & $3$ & $6$ & $14$ & $30$ & $62$ & $2^{m-1} - 2$\\
      \midrule 
      $3$ & $1$ & $7$ & $25$ & $79$ & $241$ & $727$ & ----- & $3^m - 2$\\
      \midrule
      $5$ & $4$ & $24$ & $124$ & $624$ & ----- & ----- & ----- & $5^m - 1$\\
      \midrule
      $7$ & $6$ & $48$ & $342$ & ----- & ----- & ----- & ----- & $7^m - 1$\\
    \bottomrule 
    \end{tabular}
  \end{center}
\end{table}

If $E/\QQ$ is an  elliptic curve with additive reduction at $p$, we recall it has either potentially good or potentially multiplicative reduction at $p$. We have observed that the formulae of $\lambda$-invariants we have found in Tables~\ref{tab:table1} to \ref{tab:table3} seem to be related to the potential reduction type of the curves. We give in Tables~\ref{tab:table5}, \ref{tab:table6} and \ref{tab:table7} the potential reduction types that we have been able to work out for the curves we have studied with $p=3,5$ and $7$ respectively. When $p=3$ and $E=54a$ or $54b$, we have found that these curves have potentially good reduction. But we have been unable to find the number field where good reduction is attained. As a result, we do not know whether it has potentially good ordinary reduction or potentially good supersingular reduction. It seems even more difficult to determine the potential type of an elliptic curve with additive reduction at $p=2$. Since we are mostly interested in further investigations in Iwasawa Theory for odd primes, we have decided not to study the potential reduction types of the elliptic curves in Table~\ref{tab:table0}.

\begin{table}[h!]
  \begin{center}
    \caption{Potential reduction of elliptic curves having additive reduction at $p=3$.}
    \label{tab:table5}
    \begin{tabular}{l|l|l}
    \toprule \multirow{2}{*}{\textbf{Isogeny class}} & \multirow{2}{*}{\textbf{\boldmath $K = \QQ( \cdot )$}} & \textbf{Reduction of} \boldmath $E / K$ \textbf{at a prime}\\
    & & \textbf{ideal of} \boldmath $K$ \textbf{lying above} \boldmath $p$\\
    \midrule
    $27a$ & $54^{\frac{1}{12}}$ & supersingular\\
    \midrule
    $36a$ & $3^{\frac{1}{4}}$ & supersingular\\
    \midrule
    $45a$ & $3^{\frac{1}{2}}$ & split multiplicative\\
    \midrule
    $54a$ & ?? & good\\
    \midrule
    $54b$ & ?? & good\\
    \midrule
    $63a$ & $3^{\frac{1}{2}}$ & non-split multiplicative\\
    \midrule
    $72a$ & $3^{\frac{1}{2}}$ & split multiplicative\\
    \midrule
    $90a$ & $3^{\frac{1}{4}}$ & supersingular\\
    \midrule
    $90b$ & $3^{\frac{1}{4}}$ & supersingular\\
    \midrule
    $90c$ & $3^{\frac{1}{2}}$ & non-split multiplicative\\
    \midrule
    $99a$ & $3^{\frac{1}{4}}$ & supersingular\\
    \midrule
    $99b$ & $3^{\frac{1}{2}}$ & split multiplicative\\
    \midrule
    $99c$ & $3^{\frac{1}{4}}$ & supersingular\\
    \midrule
    $99d$ & $3^{\frac{1}{2}}$ & good ordinary\\
    \midrule
    $108a$ & $54^{\frac{1}{12}}$ & supersingular\\
    \midrule
    $153a$ & $6^{\frac{1}{4}}$ & supersingular\\
    \midrule
    $153c$ & $3^{\frac{1}{2}}$ & supersingular\\
    \midrule
    $153d$ & $3^{\frac{1}{4}}$ & supersingular\\
    \bottomrule 
    \end{tabular}
  \end{center}
\end{table}

\begin{table}[h!]
  \begin{center}
    \caption{Potential reduction of elliptic curves having additive reduction at $p=5$.}
    \label{tab:table6}
    \begin{tabular}{l|l|l}
    \toprule \multirow{2}{*}{\textbf{Isogeny class}} & \multirow{2}{*}{\textbf{\boldmath $K = \QQ( \cdot )$}} & \textbf{Reduction of} \boldmath $E / K$ \textbf{at a prime}\\
    & & \textbf{ideal of} \boldmath $K$ \textbf{lying above} \boldmath $p$\\
    \midrule
    $50a$ & $5^{\frac{1}{3}}$ & supersingular\\
    \midrule
    $50b$ & $5^{\frac{1}{6}}$ & supersingular\\
    \midrule
    $75a$ & $5^{\frac{1}{3}}$ & supersingular\\
    \midrule
    $75b$ & $5^{\frac{1}{2}}$ & split multiplicative\\
    \midrule
    $75c$ & $5^{\frac{1}{6}}$ & supersingular\\
    \midrule
    $100a$ & $5^{\frac{1}{2}}$ & non-split multiplicative\\
    \midrule
    $150a$ & $5^{\frac{1}{4}}$ & good ordinary\\
    \midrule
    $150b$ & $5^{\frac{1}{4}}$ & good ordinary\\
    \midrule
    $150c$ & $5^{\frac{1}{2}}$ & non-split multiplicative\\
    \midrule
    $175a$ & $5^{\frac{1}{4}}$ & good ordinary\\
    \midrule
    $175b$ & $5^{\frac{1}{2}}$ & non-split multiplicative\\
    \midrule
    $175c$ & $5^{\frac{1}{4}}$ & good ordinary\\
    \midrule
    $225a$ & $5^{\frac{1}{6}}$ & supersingular\\
    \midrule
    $225b$ & $5^{\frac{1}{3}}$ & supersingular\\
    \bottomrule 
    \end{tabular}
  \end{center}
\end{table}
\newpage

\begin{table}[h!]
  \begin{center}
    \caption{Potential reduction of elliptic curves having additive reduction at $p=7$.}
    \label{tab:table7}
    \begin{tabular}{l|l|l}
    \toprule \multirow{2}{*}{\textbf{Isogeny class}} & \multirow{2}{*}{\textbf{\boldmath $K = \QQ( \cdot )$}} & \textbf{Reduction of} \boldmath $E / K$ \textbf{at a prime}\\
    & & \textbf{ideal of} \boldmath $K$ \textbf{lying above} \boldmath $p$\\
    \midrule
    $49a$ & $7^{\frac{1}{4}}$ & supersingular\\
    \midrule
    $98a$ & $7^{\frac{1}{2}}$ & non-split multiplicative\\
    \midrule
    $147a$ & $7^{\frac{1}{2}}$ & split multiplicative\\
    \midrule
    $147b$ & $7^{\frac{1}{3}}$ & good ordinary\\
    \midrule
    $147c$ & $7^{\frac{1}{6}}$ & good ordinary\\
    \midrule
    $196a$ & $7^{\frac{1}{6}}$ & good ordinary\\
    \midrule
    $196b$ & $7^{\frac{1}{3}}$ & good ordinary\\
    \midrule
    $245a$ & $7^{\frac{1}{4}}$ & supersingular\\
    \midrule
    $245b$ & $7^{\frac{1}{4}}$ & supersingular\\
    \midrule
    $245c$ & $7^{\frac{1}{2}}$ & non-split multiplicative\\
    \midrule
    $294a$ & $7^{\frac{1}{3}}$ & good ordinary\\
    \midrule
    $294b$ & $7^{\frac{1}{6}}$ & good ordinary\\
    \midrule
    $294c$ & $7^{\frac{1}{2}}$ & split multiplicative\\
    \midrule
    $294d$ & $7^{\frac{1}{3}}$ & good ordinary\\
    \midrule
    $294e$ & $7^{\frac{1}{6}}$ & good ordinary\\
    \midrule
    $294f$ & $7^{\frac{1}{4}}$ & supersingular\\
    \midrule
    $294g$ & $7^{\frac{1}{4}}$ & supersingular\\
    \midrule
    $392a$ & $7^{\frac{1}{2}}$ & split multiplicative\\
    \midrule
    $392b$ & $7^{\frac{1}{6}}$ & good ordinary\\
    \midrule
    $392c$ & $7^{\frac{1}{3}}$ & good ordinary\\
    \midrule
    $392d$ & $7^{\frac{1}{2}}$ & non-split multiplicative\\
    \midrule
    $392e$ & $7^{\frac{1}{3}}$ & good ordinary\\
    \midrule
    $392f$ & $7^{\frac{1}{6}}$ & good ordinary\\
    \midrule
    $441a$ & $7^{\frac{1}{6}}$ & good ordinary\\
    \midrule
    $441b$ & $7^{\frac{1}{3}}$ & good ordinary\\
    \midrule
    $441c$ & $7^{\frac{1}{2}}$ & split multiplicative\\
    \midrule
    $441d$ & $7^{\frac{1}{4}}$ & supersingular\\
    \midrule
    $441e$ & $7^{\frac{1}{3}}$ & good ordinary\\
    \midrule
    $441f$ & $7^{\frac{1}{6}}$ & good ordinary\\
    \bottomrule 
    \end{tabular}
  \end{center}
\end{table}

\clearpage
\bibliographystyle{amsalpha}
\bibliography{references}
\end{document}